\providecommand{\abs}[1]{\ensuremath{\lvert#1\rvert}\xspace}
\providecommand{\norm}[1]{\ensuremath{\lVert#1\rVert}\xspace}
\providecommand{\ip}[1]{\ensuremath{\langle #1\rangle}\xspace}
\providecommand{\ipf}[1]{\ensuremath{\left( #1\right)}\xspace}
\newcommand{\defin}[1]{{\em #1}}
\newcommand{\C}{\ensuremath{{\mathbb C}}\xspace}
\newcommand{\R}{\ensuremath{{\mathbb R}}\xspace}
\newcommand{\N}{\ensuremath{{\mathbb N}}\xspace}
\newcommand{\Quat}{\ensuremath{{\mathbb H}}\xspace}
\newcommand{\K}{\ensuremath{{\mathbb K}}\xspace}
\newcommand{\eqsp}{\quad}
\newcommand{\lmp}{\ensuremath{\ell_2^m\rightarrow\ell_p^n}}
\newcommand{\lmpk}{\ensuremath{\ell_{2;\K}^m\rightarrow\ell_{p;\K}^n}}
\newcommand{\lpkn}{\ensuremath{\ell_{p;\K}^n}}
\newcommand{\ltkm}{\ensuremath{\ell_{2;\K}^m}}
\newcommand{\NKmp}{\ensuremath{N_{\K}(m,p)}}
\newcommand{\FKmp}{\ensuremath{\Phi_{\K}(m,p)}}
\newcommand{\matrD}{\ensuremath{{\mathcal D}}\xspace}
\newcommand{\dm}{\ensuremath{\,d}}
\theoremstyle{plain}
\newtheorem{theorem}{Theorem}
\newtheorem{lemma}[theorem]{Lemma}
\theoremstyle{definition}
\newtheorem{remark}[theorem]{Remark}
\author{Yu. I. Lyubich}
\title{Upper bound for isometric embeddings \lmp}
\date{}
\begin{document}


\bibliographystyle{amsplain}

\maketitle

\footnotetext[1]{{\em 2000 Mathematics Subject Classification:
    46B04}\newline\indent {\em Keywords:} Isometric embeddings,
  quaternion spaces }

\begin{abstract}
  The isometric embeddings $\lmpk$ ($m\geq 2$, $p\in 2\N$) over a
  field $\K\in\lbrace \R,\C,\Quat\rbrace$ are considered, and an
  upper bound for the minimal $n$ is proved. In the commutative case
  ($\K\neq\Quat$) the bound was obtained by Delbaen, Jarchow and
  Pe{\l}czy{\'n}ski (1998) in a different way.
\end{abstract}

Let $\K$ be one of three fields $\R,\C,\Quat$ (real, complex or quaternion). 
Let $\K^n$ be the $\K$-linear space consisting of columns $x=\left[\xi_i\right]_1^n$, 
$\xi_i\in\K$, with the right (for definiteness) multiplication by scalars 
$\alpha\in\K$. The normed space $\lpkn$ is $\K^n$ provided with the norm
\[
  \norm{x}_p = \left(\sum_{k=1}^n \abs{\xi_i}^p\right)^{1/p},\eqsp 1\leq p <
  \infty.
\]

For $p=2$ this space is Euclidean, $\norm{x}_2 = \sqrt{\ip{x,x}}$, where
the inner product $\ip{x,y}$ of $x$ and a vector
$y=[\eta_i]_1^n$ is
\[
  \ip{x,y} = \sum_{i=1}^n \overline{\xi_i}\eta_i.
\]

An isometric embedding $\lmpk$, $2\leq m\leq n$, may exist only if
$p\in 2\N = {2,4,6,\ldots}$, see \cite{lyubich70} for $\K=\R$ and
\cite{lyushat05_PeterMJ} for any $\K$. Conversely, under
these conditions for $m$ and $p$, there exists an $n$ such that $\ltkm$ can
be isometrically embedded into $\lpkn$, see \cite{milman88} (and also
\cite{lyuvas93,reznick92}) for $\K=\R$, \cite{konig95} for $\K=\C$, and
\cite{lyushat05_PeterMJ} for $\K=\Quat$, $\C$ and $\R$
simultaneously. The proofs of existence in these papers also yield some upper
bounds for the minimal $n=\NKmp$. According to \cite{lyushat05_PeterMJ}, 
these bounds can be joined in the inequality   
\begin{equation}
  \label{eq:1}
  \NKmp\leq \dim\FKmp,
\end{equation}
where $\FKmp$ is the space of homogeneous polynomials (forms) $\phi(x)$
over $\R$ of degree $p$ in real coordinates on $\K^m$ such that
$\phi(x\alpha)=\phi(x)$ for all $\alpha\in\K$, $\abs{\alpha}=1$. For
$\K=\R$ the latter condition is fulfilled automatically since
$p\in2\N$, so $\Phi_{\R}(m,p)$ consists of all forms of degree $p$
on $\R^m$. The space $\Phi_{\C}(m,p)$ coincides with that which was
used in \cite{konig95}. Note that in all cases
$\dim\FKmp$ can be explicitly expressed through binomial
coefficients. (All the formulas are brought together in
\cite[Theorem 2]{lyushat05_PeterMJ}.)

In the present paper we prove that
\begin{equation}
  \label{eq:2}
  \NKmp\leq \dim \FKmp-1.
\end{equation}
For $\K=\R$ and $\C$ this result (in terms of binomial coefficients)
was obtained by Delbaen, Jarchow and Pe{\l}czy{\'n}ski
\cite{delbaenetal98} as a by-product of the proof of their Theorem~B.
Their rather complicated technique essentially uses the commutativity
of the field $\K$, so it is not applicable to $\K=\Quat$. Our 
proof of (\ref{eq:2}) is general and elementary. 
Let us start with two lemmas, the first of which is well known.

\begin{lemma}
  \label{lem:1}
  A linear mapping $f:\lmpk$ is isometric if and only if there is a
  system of vectors $u_k\in\ltkm$, $1\leq k\leq n$, such that the
  identity
  \begin{equation}
    \label{eq:3}
    \sum_{k=1}^n\abs{\ip{u_k,x}}^p = \ip{x,x}^{p/2}
  \end{equation}
  holds for $x\in\ltkm$.
\end{lemma}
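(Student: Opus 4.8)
The plan is to reduce the isometry condition to the identity \eqref{eq:3} by passing through the coordinate functionals of $f$ together with the Riesz representation. First I would write $f(x)=[f_1(x),\ldots,f_n(x)]$, where each coordinate $f_k\colon\ltkm\to\K$ is a right-linear functional, so that $f_k(x\alpha)=f_k(x)\alpha$ for all $\alpha\in\K$. Expanding $x=\sum_j e_j\xi_j$ in the standard basis yields $f_k(x)=\sum_j a_{kj}\xi_j$ with $a_{kj}=\bigl(f(e_j)\bigr)_k\in\K$. Setting $u_k=[\overline{a_{kj}}]_j\in\ltkm$ then gives $f_k(x)=\ip{u_k,x}$; this is just the Riesz representation, and it is valid over each of $\R,\C,\Quat$ because the inner product is right-linear in its second slot, $\ip{u,x\alpha}=\ip{u,x}\alpha$.

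With this in hand, the two sides of \eqref{eq:3} are exactly the $p$-th powers of the two norms: $\norm{f(x)}_p^p=\sum_{k}\abs{f_k(x)}^p=\sum_{k}\abs{\ip{u_k,x}}^p$, while $\norm{x}_2^p=\ip{x,x}^{p/2}$ because $\ip{x,x}=\sum_i\abs{\xi_i}^2$ is a nonnegative real number. Since both norms are nonnegative, the isometry $\norm{f(x)}_p=\norm{x}_2$ holds for all $x$ if and only if the $p$-th powers agree, i.e. if and only if \eqref{eq:3} holds. This settles both implications simultaneously: from an isometry I read off the $u_k$ constructed above, and conversely any vectors $u_k$ satisfying \eqref{eq:3} define a linear map $f(x)=[\ip{u_k,x}]_1^n$ that is isometric.

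I expect no serious obstacle, as the lemma is a routine reformulation; the only point demanding care is the non-commutativity of $\Quat$. Throughout I must keep scalars on the right: the functionals $f_k$ are right-linear, the Riesz vectors carry the conjugate entries $\overline{a_{kj}}$, and the identity $\ip{u,x\alpha}=\ip{u,x}\alpha$ relies on right-linearity in the second argument. Crucially, $\ip{x,x}$ stays real and nonnegative over $\Quat$ (since $\overline{\xi}\xi=\abs{\xi}^2$ there as well), so the equivalence between the norm identity and its $p$-th power is unaffected, and a single argument covers all three fields at once.
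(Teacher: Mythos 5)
Your proposal is correct and follows essentially the same route as the paper: the paper likewise writes $fx=\left[\ip{u_k,x}\right]_1^n$ for a system $(u_k)_1^n$ (its ``frame'') and observes that (\ref{eq:3}) is just $\norm{fx}_p=\norm{x}_2$ raised to the $p$-th power. Your extra care with right-linearity and the Riesz representation over $\Quat$ is a sound elaboration of what the paper leaves implicit.
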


\begin{proof}
  A general form of $f$ as a linear mapping is
  $fx=\left[\ip{u_k,x}\right]_1^n$, where $\left(u_k\right)_1^n$ is
  a system of vectors from $\ltkm$ (called the \defin{frame} of $f$ 
\cite{lyushat05_PeterMJ,lyuvas93}).
 The identity (\ref{eq:3}) is nothing but $\norm{fx}_p=\norm{x}_2$.
\end{proof}

An isometric embedding $\lmpk$ is called \defin{minimal} if $n=\NKmp$.

\begin{lemma}
  \label{lem:2}
  If $f$ is minimal and $\left(u_k\right)_1^n$ is its frame then the 
  functions $\abs{\ip{u_k,x}}^p$ are linearly independent.
\end{lemma}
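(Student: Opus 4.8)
The plan is to prove the contrapositive: if the functions $\abs{\ip{u_k,x}}^p$ are linearly dependent, then $f$ is not minimal, i.e.\ we can produce an isometric embedding into $\lpkn[n-1]$ using fewer than $n$ vectors. So suppose there is a nontrivial relation
\[
  \sum_{k=1}^n c_k \abs{\ip{u_k,x}}^p = 0 \eqsp \text{for all } x\in\ltkm,
\]
with real coefficients $c_k$ not all zero (the functions are real-valued, so we may take $c_k\in\R$). The idea is to combine this relation with the defining identity (\ref{eq:3}) to eliminate one of the summands.

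First I would normalize: by reindexing, assume $c_n\neq 0$, and rescale so that $c_n = 1$. Then the dependence relation lets me solve for the $n$-th term,
\[
  \abs{\ip{u_n,x}}^p = -\sum_{k=1}^{n-1} c_k \abs{\ip{u_k,x}}^p.
\]
The difficulty is that this alone does not obviously give a new frame, because the coefficients $-c_k$ need not be of the right sign or form to be absorbed as $\abs{\ip{v_k,x}}^p$ for new vectors $v_k$. The key trick is to subtract a small multiple of the relation from the identity. For a real parameter $t$, consider
\[
  \sum_{k=1}^n \bigl(1 + t c_k\bigr)\abs{\ip{u_k,x}}^p = \ip{x,x}^{p/2},
\]
which holds for every $t$ since we have merely added $t\cdot 0$ to the left side of (\ref{eq:3}). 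Now I would choose $t$ so that at least one coefficient $1 + t c_k$ becomes zero while all the others remain nonnegative.

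To arrange this, let $t^\ast$ be the value of $t$ of smallest absolute value at which some coefficient first vanishes: taking $t^\ast = -1/c_j$ where $j$ maximizes $\abs{c_k}$ among indices with the appropriate sign, one checks that $1 + t^\ast c_k \geq 0$ for all $k$ and $1 + t^\ast c_j = 0$. Setting $\lambda_k = 1 + t^\ast c_k \geq 0$ and replacing each surviving vector $u_k$ by the rescaled vector $u_k \lambda_k^{1/p}$ (using the right scalar multiplication, and noting $\abs{\ip{u_k\lambda_k^{1/p},x}}^p = \lambda_k\abs{\ip{u_k,x}}^p$ since $\lambda_k\geq 0$ is a nonnegative real scalar), I obtain a new system of at most $n-1$ nonzero vectors satisfying an identity of the form (\ref{eq:3}). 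By Lemma~\ref{lem:1} this yields an isometric embedding $\lmpk$ with fewer than $n$ coordinates, contradicting the minimality of $f$.

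The main obstacle I anticipate is verifying that the sign-control step genuinely forces all coefficients $\lambda_k$ to stay nonnegative at the critical value $t^\ast$, and confirming that the dependence is nontrivial enough that the vanishing coefficient really removes a vector rather than a term that was already absent. These are elementary observations about the piecewise-linear behavior of $t\mapsto \min_k(1+tc_k)$, but they are the crux: once some $\lambda_j = 0$ with all others nonnegative, the reduction to $n-1$ vectors is immediate and the contradiction with minimality closes the argument.
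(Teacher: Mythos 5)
Your proof is correct and is essentially the paper's own argument: the paper normalizes the dependence relation so that $\max_k\omega_k=1$ and subtracts it from (\ref{eq:3}), which is exactly your choice of the critical parameter $t^\ast$ that makes one coefficient vanish while keeping all the others nonnegative. The subsequent rescaling $v_k=u_k(1-\omega_k)^{1/p}$ and the appeal to minimality are identical to what you propose.
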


\begin{proof}
  Let
  \begin{equation}
    \label{eq:4}
    \sum_{k=1}^n\omega_k\abs{\ip{u_k,x}}^p =0
  \end{equation}
  with some real $\omega_k$, $\displaystyle\max_k \omega_k=1$, and let
  $\omega_n=1$ for definiteness. By subtraction of (\ref{eq:4}) from
  (\ref{eq:3}) we get
  \[
    \sum_{k=1}^{n-1}\left(1-\omega_k\right)\abs{\ip{u_k,x}}^p = \ip{x,x}^{p/2},
  \]
  i.e.
  \[
    \sum_{k=1}^{n-1}\abs{\ip{v_k,x}}^p = \ip{x,x}^{p/2},
  \]
  where $v_k = u_k\left(1-\omega_k\right)^{1/p}$. This contradicts the
  minimality of $f$.
\end{proof}

\begin{remark}
  Since all functions $\abs{\ip{\cdot,x}}^p$ belong to $\FKmp$, the
  inequality (\ref{eq:1}) immediately follows from Lemma \ref{lem:2}.
  However, the existence of an isometric embedding
  $\lmpk$ is assumed in this context. 
\end{remark}

Now we proceed to the proof of (\ref{eq:2}). 

\begin{proof}
  Let $f:\lmpk$ be a minimal isometric embedding. Then, according to (\ref{eq:1}), 
$n\leq\dim\FKmp$. We have to prove that the equality is impossible. 
 
Suppose to the contrary. Then the system
  $\left(\abs{\ip{u_k,x}}^p\right)_1^n$ corresponding to the frame of
  $f$ is a basis of $\FKmp$ by Lemma \ref{lem:2}. In particular,
  there is an expansion
  \begin{equation}
    \label{eq:5}
    \left(\sum_{i=1}^m\lambda_i\abs{\xi_i}^2\right)^{p/2} =
    \sum_{k=1}^n a_k(\lambda_1,\ldots,\lambda_m)\abs{\ip{u_k,x}}^p,
  \end{equation}
  where $(\lambda_i)_1^m\in\R^m$ and $a_k$ are some functions of these
  parameters.

  Now we introduce the inner product
  \[
    \ipf{\phi_1,\phi_2} = \int_S \phi_1(x)\phi_2(x)\dm\sigma(x)\eqsp
    (\phi_1,\phi_2\in\FKmp)
  \]    
  where $\sigma$ is the standard measure on the unit sphere
  $S\subset\ltkm$. In the Euclidean space $\FKmp$ we have the basis
  $\left(\theta_k(x)\right)_1^n$ dual to
  $\left(\abs{\ip{u_k,x}}^p\right)_1^n$. This allows us to represent the
  coefficients $a_k$ as
  \[
    a_k(\lambda_1,\ldots,\lambda_m) =
    \int_S\left(\sum_{i=1}^m\lambda_i\abs{\xi_i}^2\right)^{p/2} \theta_k(x)\dm\sigma(x).
  \]
  Hence, $a_k(\lambda_1,\ldots,\lambda_m)$ are forms
  of degree $p/2$, a fortiori, they are continuous. 

  Denote by $\R^m_{+}$ the open coordinate cone in $\R^m$, so 
$\R^m_{+} =\{(\lambda_i)_1^m\subset\R^m: \lambda_1>0,...,\lambda_m>0\}$. 
 We prove that \defin{on} $\R^m_{+}$ \defin{all} $a_k(\lambda_1,\ldots,\lambda_m)>0$ 
 or equivalently, $\hat{a}(\lambda_1,\ldots,\lambda_m)\equiv\displaystyle\min_k
  a_k(\lambda_1,\ldots,\lambda_m) >0$. Suppose to the contrary: let
$\hat{a}(\gamma_1,\ldots,\gamma_m)<0$ for some 
$(\gamma_i)_1^m\subset\R^m_{+}$. On the other hand, $\hat{a}(1,\ldots,1)=1$
by comparing (\ref{eq:3}) to (\ref{eq:5}) with all $\lambda_i=1$.
Since $\hat{a}$ is continuous, we have $\hat{a}(\mu_1,\ldots,\mu_m)=0$ 
for some $(\mu_i)_1^m\subset\R^m_{+}$. But
  the latter means that all $a_k(\mu_1,\ldots,\mu_m)\geq 0$ and, at
  least one of them is zero, say $a_n(\mu_1,\ldots,\mu_m)=0$. Therefore, 
  \[
    \left(\sum_{i=1}^m \mu_i\abs{\xi_i}^2\right)^{p/2} =
    \sum_{k=1}^{n-1}a_k(\mu_1,\ldots,\mu_m)\abs{\ip{u_k,x}}^p, 
  \]
  whence
  \begin{equation}
    \label{eq:6}
    \ip{z,z}^{p/2} = \sum_{k=1}^{n-1}\abs{\ip{v_k,z}}^p,
  \end{equation}
  where
  \[
    z=\matrD x, \eqsp v_k = \left(a_k(\mu_1,\ldots,\mu_m)\right)^{1/p}\matrD^{-1}u_k
  \]
  and \matrD is the diagonal matrix with entries
  $\mu_1^{1/2},\ldots,\mu_m^{1/2}$. By Lemma \ref{lem:1} the identity
  (\ref{eq:6}) means that the system $(v_k)_1^{n-1}$ is the frame of 
  an isometric embedding
  $\ltkm\rightarrow\ell_{p;\K}^{n-1}$. This contradicts the minimality of $n$. 
As a result, all $a_k(\lambda_1,\ldots,\lambda_m)\geq 0$ 
for $\lambda_1\geq 0,\ldots,\lambda_m\geq 0$, i.e. on the closed 
coordinate cone. 

  Now we take $\xi_1=1$ and $\xi_i=0$ for all $i\geq 2$, so $x=e_1$,
  the first vector from the canonical basis of $\ltkm$. In this setting 
(\ref{eq:5}) reduces to
  \[
    \lambda_1^{p/2} = \sum_{k=1}^n
    a_k(\lambda_1,\ldots,\lambda_m)\abs{\ip{u_k,e_1}}^p.
  \]
(Recall that $m\geq 2$.) This yields 
  \[
    \sum_{k=1}^n
    a_k(0,\lambda_2,\ldots,\lambda_m)\abs{\ip{u_k,e_1}}^p=0.
  \]
  
  Assume all $\ip{u_k,e_1}\neq 0$. Since for
  $\lambda_2>0,\ldots,\lambda_m>0$ all
  \mbox{$a_k(0,\lambda_2,\ldots,\lambda_m)\geq0$}, all of them are equal to zero.  
  Hence, the right side of the identity 
 (\ref{eq:5}) vanishes as long as $\lambda_1 =0$, in contrast to the function on the left  
  side, a contradiction. To finish the proof
 we only have to show that the  assumption 
$\ip{u_k,e_1}\neq 0$, $1\leq k\leq n$, is not essential.

 First, note that all $u_k\neq 0$, otherwise, the number 
$n$ in (\ref{eq:3}) would be reduced. Therefore, the sets 
$\{x: \ip{u_k,x} =0\}, 1\leq k\leq n$, are 
hyperplanes in $\ltkm$. Their union is different from the whole space. Hence, there is a 
vector $e$ such that all $\ip{u_k,e_}\neq 0$, $\norm{e}_2=1$. This $e$ can be represented 
as 
$e=ge_1$ where $g$ is an isometry of the space $\ltkm$. Indeed, this space is Euclidean, so its 
isometry group is transitive on the unit sphere. Thus, all $\ip{g^{-1}u_k,e_1}\neq 0$. 
On the other hand, $(g^{-1}u_k)_1^n$ is the frame of the isometric embedding $fg:\lmpk$.    
\end{proof}


\bibliography{all}

\begin{singlespace}
  Address:

  {\it Department of Mathematics, 

    Technion, 32000, 

    Haifa, Israel}

  \smallskip
  email: {\it lyubich@tx.technion.ac.il}  
\end{singlespace}

\end{document}